
\documentclass[preprint,12pt]{elsarticle}



 \usepackage{graphicx}

\usepackage{amssymb}
\usepackage{amsthm}
\usepackage{amsmath}
\usepackage{amsfonts}

\usepackage{setspace}
\doublespacing


\usepackage{natbib}
\biboptions{sort&compress, square}

\newtheorem{theorem}{Theorem}

\newtheorem{corollary}[theorem]{Corollary}
\newtheorem{definition}[theorem]{Definition}

\newtheorem{example}[theorem]{Example}

\newtheorem{proposition}[theorem]{Proposition}
\newtheorem{remark}[theorem]{Remark}
\date{}
\journal{ }

\begin{document}

\begin{frontmatter}


\author{Mohammed-Salah Abdelouahab\corref{*}}
\cortext[*]{corresponding author}
\ead{medsala3@yahoo.fr}
\author{Nasr-Eddine Hamri}
\ead{n.hamri@centre-univ-mila.dz}


\title{The Gr$\ddot{\text{u}}$nwald-Letnikov fractional-order derivative with fixed memory length}


\address{Department of Mathematics and Computer sciences\\
Mila University Centre, 43000, Algeria}

\begin{abstract}
Contrary to integer order derivative, the fractional-order derivative of a non-constant periodic function is not a periodic function with the same period, as a consequence of this property the time-invariant fractional order system does not have any non-constant periodic solution unless the lower terminal of the derivative is $\pm\infty$, which is not practical. This property limits the applicability areas of fractional derivatives and makes it unfavorable, for a wide range of periodic real phenomena. Therefore enlarging the applicability of fractional system to such real area is an important research topic.
In this paper we attempt to give a solution for the above problem by imposing a simple modification on the Gr$\ddot{\text{u}}$nwald-Letnikov definition of fractional derivative, this modification consists of fixing the memory length and varying the lower terminal of the derivative. It is shown that the new proposed definition of fractional derivative preserves the periodicity.

\end{abstract}

\begin{keyword}
Fractional derivative\sep Memory length\sep Periodic function


\end{keyword}

\end{frontmatter}

\section{Introduction}
In 1695, Leibniz laid the foundations of fractional-order derivative which means the extension of integer-order derivative concept \cite{0}, but the first systematic studies seem to have been made at the beginning and middle of the nineteenth century by Liouville, Riemann, and Holmgren. Liouville has expanded functions in series of exponentials and defined the $n$th-order derivative of such a series by operating term-by-term as though $n$ were a positive integer. Riemann proposed a different definition that involved a definite integral and was applicable to power series with non-integer exponents. It was Gr$\ddot{\text{u}}$nwald and
Krug who first unified the results of Liouville and Riemann. Gr$\ddot{\text{u}}$nwald,
by returning to the original sources and adopting as starting point the
definition of a derivative as the limit of a difference quotient and arriving at
definite-integral formulas for the $n$th-order derivative. Krug, working through
Cauchy's integral formula for ordinary derivatives, showed that Riemann's
definite integral had to be interpreted as having a finite lower limit while
Liouville's definition corresponded to a lower limit $-\infty$ \cite{2, 2*,3,4,0*}. 
It turns out that the Riemann-Liouville derivatives have certain disadvantages when
trying to model real-world phenomena with fractional differential equations. In the second half of the twentieth century Caputo proposed a modified concept of a fractional derivative \cite{1}, by inverting the composite order of integer-order derivative operator and integral operator arising in the  Riemann-Liouville definition, the resulting definition is better suited to such tasks.
It has been found that many systems in interdisciplinary fields can be described
by the fractional differential equations, such as viscoelastic
systems, dielectric polarization, electrode-electrolyte polarization, electromagnetic waves, and quantum
evolution of complex systems \cite{a,b,c,d,e}.
Due to the growing interest of fractional-order derivatives to be applied in different areas, it seems analysis of this type of derivative is of great importance.
The existence of periodic solutions is often a desired property in dynamical systems, constituting one of the most important research directions in the theory of dynamical systems, with applications ranging from celestial mechanics to biology and finance. 
In chaos theory, the idea of chaos control is based on the fact that
chaotic attractors have a skeleton made of an infinite number
of unstable periodic orbits which are subjected for stabilization
\cite{e1}. Thus for chaos control in fractional
order systems, it is important to show that periodic solutions exist
in such systems. Tavazoei \cite{f} have proved that the fractional-order derivatives of a periodic function with a specific period cannot be a periodic function with the same period, as a consequence, the periodic solution cannot be detected in fractional-order systems, under any
circumstances \cite{g,h,i,j,k,l}. This property limits the applicability areas of fractional-order systems and makes it unfavorable, for a wide range of periodic real phenomena. Therefore enlarging the applicability of fractional-order systems to such real area is an important research topic. The most important contribution of this paper is to this end.
\section{Gr$\ddot{\text{u}}$nwald-Letnikov fractional derivative}

Lets $\alpha>0$, the Gr$\ddot{\text{u}}$nwald-Letnikov $\alpha$th order fractional derivative of function $f(t)$  with respect to $t$ and the terminal value $a$ is given by \cite{2}

\begin{equation} \label{e22}
\ ^{GL}_{a}D_{t}^{\alpha}f(x)=\lim\limits_{%
\begin{array}
[c]{c}%
h\rightarrow0\\
nh=x-a
\end{array}
}%
h^{-\alpha}\sum\limits_{k=0}^{n}(-1)^{k}\left( 
\begin{array}
[c]{c}%
\alpha\\
k
\end{array}
\right) f(x-kh),
\end{equation}

where
$$\left( 
\begin{array}
[c]{c}%
\alpha\\
k
\end{array}
\right)=\dfrac{\alpha(\alpha-1)(\alpha-2)...(\alpha-k+1)}{k!}=\dfrac{\Gamma(\alpha+1)}{k!\Gamma(\alpha-k+1)},
$$ 

\begin{equation} \label{e23}
\ ^{GL}_{a}D_{t}^{\alpha}f(x)=\lim\limits_{%
h\rightarrow0\\
}%
h^{-\alpha}\sum\limits_{k=0}^{\frac{x-a}{h}}(-1)^{k}\left( 
\dfrac{\Gamma(\alpha+1)}{k!\Gamma(\alpha-k+1)}
\right) f(x-kh),
\end{equation}

where $\Gamma$ is the gamma function defined by the Euler limit expression

\begin{equation}\label{e0}
\Gamma(x)=\lim\limits_{n\rightarrow \infty}\dfrac{n!n^{x}}{x(x+1)...(x+n)}
\end{equation}
where $x>0$.

or the so-called Euler integral definition:
\begin{equation}
\Gamma(x)=\int^{\infty}_{0}	t^{x-1}e^{-t}dt \  \  \  (x>0),
\end{equation}

The following theorem gives negative answer for question of preservation of the periodicity \cite{f}

\begin{theorem}
Suppose that $f(t)$ is $(m - 1)$-times continuously
differentiable and $f^{(m)}$ is bounded. If $f(t)$ is a non-constant
periodic function with period $T$,then the functions $\ ^{GL}_{\ a}D_{t}^{\alpha}f(t)$, 
where $0<\alpha \notin \textbf{N}$ and $m$ is the first integer greater than $\alpha$, cannot be
periodic functions with period $T$.
\end{theorem}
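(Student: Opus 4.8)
\emph{Proof strategy (sketch).} The plan is to argue by contradiction: assuming the function $g:={}^{GL}_{a}D_{t}^{\alpha}f$ is $T$-periodic, I will slide the lower terminal of the derivative to $-\infty$ in steps of length $T$ (using only the $T$-periodicity of $f$), show that this forces the ``transient'' part of $g$ to vanish, and conclude that $f$ must be constant. As a preliminary I would record the standard fact that, under the stated regularity, the Gr\"unwald--Letnikov derivative coincides with the Riemann--Liouville one and admits the representation
\[
{}^{GL}_{a}D_{t}^{\alpha}f(t)=\sum_{k=0}^{m-1}\frac{f^{(k)}(a)}{\Gamma(k+1-\alpha)}\,(t-a)^{k-\alpha}+\frac{1}{\Gamma(m-\alpha)}\int_{a}^{t}(t-\tau)^{m-\alpha-1}f^{(m)}(\tau)\,d\tau ,
\]
and that, since $f$ is $T$-periodic, $f^{(m)}$ is bounded and $T$-periodic with $\int_{0}^{T}f^{(m)}=f^{(m-1)}(T)-f^{(m-1)}(0)=0$; this zero-mean property is what will make the improper integrals below converge.

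The heart of the argument is the following shifting step. For $N\in\mathbb{N}$ set $g_{N}(t):={}^{GL}_{\,a-NT}D_{t}^{\alpha}f(t)$, so $g_{0}=g$. Changing the variable of integration by $T$ and using $f(\cdot+T)=f(\cdot)$ yields $g_{N-1}(t+T)=g_{N}(t)$, hence $g_{0}(t+NT)=g_{N}(t)$ for every $N$. Now let $N\to\infty$ with $t$ fixed: the transient terms are $O\big((NT)^{k-\alpha}\big)\to0$, and the Caputo-type integral converges --- conditionally, by Dirichlet's test, since $f^{(m)}$ has mean zero and $f^{(m-1)}$ is bounded --- to
\[
\mathcal{D}(t):=\frac{1}{\Gamma(m-\alpha)}\int_{-\infty}^{t}(t-\tau)^{m-\alpha-1}f^{(m)}(\tau)\,d\tau ,
\]
the convergence being uniform on compact $t$-intervals, so $\mathcal{D}$ is continuous on $[a,\infty)$. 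Invoking the assumed periodicity of $g$ gives $g(t)=g(t+NT)=g_{N}(t)\to\mathcal{D}(t)$, hence $g\equiv\mathcal{D}$ on $(a,\infty)$.

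It remains to extract the contradiction. Subtracting the Caputo integral from the identity $g\equiv\mathcal{D}$ leaves
\[
\sum_{k=0}^{m-1}\frac{f^{(k)}(a)}{\Gamma(k+1-\alpha)}\,(t-a)^{k-\alpha}=\frac{1}{\Gamma(m-\alpha)}\int_{-\infty}^{a}(t-\tau)^{m-\alpha-1}f^{(m)}(\tau)\,d\tau ,\qquad t>a .
\]
The right-hand side extends continuously to $t=a$; on the left, because $\alpha\notin\mathbb{N}$ the exponents $k-\alpha$ ($0\le k\le m-1$) are pairwise distinct negative non-integers and the coefficients $\Gamma(k+1-\alpha)^{-1}$ are nonzero, so the left-hand side can stay bounded as $t\to a^{+}$ only if $f^{(k)}(a)=0$ for all $k=0,\dots,m-1$. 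Then both sides vanish identically, so $\int_{-\infty}^{a}(t-\tau)^{m-\alpha-1}f^{(m)}(\tau)\,d\tau=0$ for all $t>a$; one integration by parts (the boundary terms disappear because $f^{(m-1)}(a)=0$ and $f^{(m-1)}$ is bounded) turns this into $\int_{-\infty}^{a}(t-\tau)^{m-\alpha-2}f^{(m-1)}(\tau)\,d\tau=0$ for all $t>a$, now with an absolutely integrable kernel. Putting $\tau=a-u$, $\theta(u):=f^{(m-1)}(a-u)$, inserting $(x+u)^{m-\alpha-2}=\Gamma(\alpha+2-m)^{-1}\int_{0}^{\infty}s^{\alpha+1-m}e^{-(x+u)s}\,ds$ and applying Fubini gives $\int_{0}^{\infty}s^{\alpha+1-m}e^{-xs}\,(\mathcal{L}\theta)(s)\,ds=0$ for all $x>0$, where $\mathcal{L}\theta$ is the Laplace transform of the bounded function $\theta$. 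Injectivity of the Laplace transform forces $\mathcal{L}\theta\equiv0$, hence $\theta\equiv0$, i.e.\ $f^{(m-1)}\equiv0$ on $(-\infty,a]$ and, by periodicity, on all of $\mathbb{R}$. But then $f$ is a polynomial of degree at most $m-2$, and a bounded polynomial is constant --- contradicting that $f$ is non-constant.

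The step I expect to be the main obstacle is the careful handling of the only conditionally convergent integrals: proving that $g_{N}\to\mathcal{D}$ uniformly on compact sets and that $\mathcal{D}$ is continuous up to $t=a$, and then justifying the Fubini/Laplace-inversion manoeuvre. The key technical device there is the single integration by parts carried out once $f^{(m-1)}(a)=0$ is known, since it trades the weakly singular kernel $(t-\tau)^{m-\alpha-1}$ for the absolutely integrable $(t-\tau)^{m-\alpha-2}$, after which the boundedness of $f^{(m-1)}$ (rather than of $f^{(m)}$) suffices for everything.
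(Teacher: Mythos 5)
The paper does not actually prove this theorem: it is quoted verbatim from Tavazoei's note (reference [f]) and used as motivation, so there is no in-paper argument to compare yours against. Judged on its own, your blind proof is a correct, essentially complete, self-contained argument, and its skeleton is sound: the shift identity $g_{N-1}(t+T)=g_N(t)$ follows either from the Gr\"unwald--Letnikov sum directly (replace $f(t+T-kh)$ by $f(t-kh)$ and note the upper summation limit becomes $(t-(a-NT))/h$) or from the integral representation; the transient terms decay because every exponent $k-\alpha$ with $0\le k\le m-1$ is strictly negative; the Weyl-type limit integral converges conditionally by Dirichlet's test thanks to the zero mean of $f^{(m)}$; matching singularities at $t=a^{+}$ kills all $f^{(k)}(a)$ because the exponents $k-\alpha$ are distinct negative non-integers and $1/\Gamma(k+1-\alpha)\neq0$ precisely because $\alpha\notin\mathbf{N}$; and the integration by parts plus Laplace-injectivity step (Fubini is justified since $\int_0^\infty s^{\alpha+1-m}e^{-xs}s^{-1}\,ds=\Gamma(\alpha-m+1)x^{m-\alpha-1}<\infty$) forces $f^{(m-1)}\equiv0$, whence $f$ is a bounded polynomial, i.e.\ constant. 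The points you flag as delicate are exactly the ones that need writing out --- in particular the continuity (or mere boundedness) of $t\mapsto\int_{-\infty}^{a}(t-\tau)^{m-\alpha-1}f^{(m)}(\tau)\,d\tau$ as $t\to a^{+}$, which follows from splitting at $a-1$ and using $L^{1}$-convergence of the kernels on $[a-1,a]$ together with the uniform-in-$t$ tail bound from the second mean value theorem --- and the representation formula itself deserves a citation valid under the stated regularity ($f^{(m)}$ bounded but not assumed continuous); with those details supplied the argument stands. Compared with the cited source, your route buys a stronger intermediate fact (periodicity of $\,^{GL}_{\ a}D_{t}^{\alpha}f$ forces $f$ to be constant outright, by identifying the would-be periodic derivative with the Liouville--Weyl derivative), at the cost of handling only conditionally convergent integrals throughout.
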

\begin{example}
The derivative of the sine function is given by 
\begin{align*}
\ ^{GL}_{\ 0}D_{t}^{\alpha}sin(t)=t^{1-\alpha}E_{2,2-\alpha}(-t^{2}),
\end{align*}
where $E_{\alpha,\beta}$ is the two-parameter Mittag-Laffler function defined by \cite{2}
\begin{align*}
E_{\alpha,\beta}(t)=\sum\limits_{k=0}^{\infty} 
\dfrac{t^{k}}{\Gamma(\alpha k+\beta)}.
\end{align*}
\end{example}
Figure \ref{fg1}  illustrate numerical approximation of $\ ^{GL}_{\ 0}D_{t}^{0.5}sin(t)$ which is not periodic but it converges to the periodic function $sin(t+\alpha \frac{\pi}{2})$.

As a consequence of the above theorem, the periodic solution cannot be detected in fractional-order systems, under any circumstances \cite{f,g}.
\begin{corollary}
A differential equation of fractional-order in the form
\begin{align*}
\ ^{GL}_{\ a}D_{t}^{\alpha}x(t)=f(x(t)),
\end{align*}
where $0<\alpha\notin\textbf{N}$, cannot have any non-constant smooth periodic solution.
\end{corollary}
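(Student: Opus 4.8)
The plan is to argue by contradiction and reduce the statement directly to Theorem~1. Suppose, to the contrary, that $x(t)$ is a non-constant smooth periodic solution of $^{GL}_{\ a}D_{t}^{\alpha}x(t)=f(x(t))$, and let $T>0$ be its (minimal) period. The first step is to record the regularity that comes for free: since $x$ is infinitely differentiable and $T$-periodic, each derivative $x^{(k)}$ is continuous and $T$-periodic, hence bounded on $[a,\infty)$; in particular $x$ meets the hypotheses of Theorem~1 with $m$ the first integer exceeding $\alpha$. Therefore $^{GL}_{\ a}D_{t}^{\alpha}x(t)$ is \emph{not} a periodic function with period $T$.

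The second step is to note that the right-hand side \emph{is} $T$-periodic: because the system is time-invariant (autonomous), $f(x(t+T))=f(x(t))$ follows immediately from $x(t+T)=x(t)$, and this holds whether or not $f\circ x$ is non-constant, since a constant function is in particular $T$-periodic. Combining the two steps, the function $^{GL}_{\ a}D_{t}^{\alpha}x(t)=f(x(t))$ would be simultaneously $T$-periodic and non-$T$-periodic, which is absurd. Hence no non-constant smooth periodic solution can exist.

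Since this is essentially a one-line consequence of Theorem~1, I do not anticipate a genuine obstacle; the only points that deserve a sentence of care are (i) verifying that smoothness together with periodicity of $x$ really yields the boundedness of $x^{(m)}$ demanded by Theorem~1 — here one uses that a continuous periodic function on $\mathbb{R}$ attains its supremum — and (ii) observing that the degenerate case in which $f(x(t))$ is constant does not provide an escape, because constant functions are periodic with every period and so still contradict the conclusion of Theorem~1. I would also remark explicitly that autonomy of the equation is what makes $f(x(t))$ inherit the period $T$ of $x$; this is the place where the hypothesis ``time-invariant'' is used.
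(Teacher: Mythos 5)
Your argument is correct and is exactly the intended one: the paper offers no written proof of this corollary, presenting it as an immediate consequence of Theorem~1 (with citations to the Tavazoei references), and the implicit reasoning is precisely your contradiction — the left-hand side cannot be $T$-periodic by Theorem~1 while the right-hand side $f(x(t))$ inherits $T$-periodicity from $x$ by autonomy. Your two points of care (boundedness of $x^{(m)}$ from smoothness plus periodicity, and the degenerate constant case) are both sound and fill in details the paper leaves unstated.
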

This property makes fractional systems not applicable for a wide range of real periodic phenomena.
\section{Gr$\ddot{\text{u}}$nwald-Letnikov fractional derivative with fixed memory length}
In this section we introduce a new definition of fractional derivative based on the Gr$\ddot{u}$nwald-Litnikov definition and we call it the Gr$\ddot{\text{u}}$nwald-Letnikov fractional derivative with fixed memory length.

\begin{definition}\label{df1} 
Let $\alpha\geq 0$, $L\geq 0$,  $m$ an integer such that $m-1\leq \alpha<m$ and $f$ an integrable function in the interval $[a-L,b]$. We define the Gr$\ddot{\text{u}}$nwald-Letnikov fractional derivative with fixed memory length $L$ of $f$,  $ \ ^{^{MG}}_{_{\ \ L}}D_{t}^{\alpha}f$ by  
\begin{equation} \label{e1}
_{_{\ \ L}}^{^{MG}}D_{t}^{\alpha}f(t)=\lim_{h\rightarrow 0}\dfrac{1}{h^{\alpha}}\sum\limits_{k=0}^{\frac{L}{h}}(-1)^{k}\dfrac{\Gamma(\alpha+1)}{k!\Gamma(\alpha-k+1)}f(t-kh), \ \ t\in [a,b].
\end{equation}
\end{definition}

\begin{remark}
A remarkable difference between classical definitions of fractional-order derivative and the new definition is that in the  classical definitions the lower terminal $a$ is fixed and the memory length $t-a$ is a function of the variable $t$  but in the new definitions the lower terminal $a=t-L$ is a function of the variable $t$  and the memory length $L$ is fixed.
\end{remark}
The definition of the Gr$\ddot{\text{u}}$nwald-Letnikov fractional derivative with fixed memory length $L$ of $f$, given in definition \textbf{(\ref{df1})} is expressed in term of a limit which is not suitable for analytic study. The following proposition gives an evaluation of this limit under some restrictions of the function $f$.
\begin{proposition}
Under the assumptions of definition \textbf{(\ref{df1})} if the function $f$ is $(m+1)$-differentiable with $f^{(m+1)}\in L_{1}[a-L,b]$ we have 
\begin{equation} \label{e6}
\ _{\ \ L}^{^{MG}}D_{t}^{\alpha}f(t)=\sum\limits_{k=0}^{m}\dfrac{f^{(k)}(t-L)L^{k-\alpha}}{\Gamma(k-\alpha+1)}+\dfrac{1}{\Gamma(m-\alpha+1)}\int\limits_{t-L}^{t}(t-\tau)^{m-\alpha}f^{(m+1)}(\tau)d\tau.  
\end{equation}
and
\begin{equation} \label{e6}
\ _{\ \ L}^{^{MG}}D_{t}^{\alpha}f(t)=\dfrac{1}{\Gamma(2m-\alpha+1)}\int\limits_{t-L}^{t}(t-\tau)^{2m-\alpha}f^{(m+1)}(\tau)d\tau.  
\end{equation}
\end{proposition}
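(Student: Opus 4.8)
The plan is to observe that $\ _{\ \ L}^{^{MG}}D_{t}^{\alpha}$ is nothing but the ordinary Gr$\ddot{\text{u}}$nwald-Letnikov derivative \eqref{e23} in which the constant lower terminal $a$ has been replaced, at each evaluation point $t$, by the moving point $t-L$. Indeed, substituting $a=t-L$ in \eqref{e23} turns the summation bound $\frac{x-a}{h}$ into $\frac{L}{h}$ and the evaluation point $x$ into $t$, so that comparison with \eqref{e1} gives, for every fixed $t$,
\begin{equation*}
\ _{\ \ L}^{^{MG}}D_{t}^{\alpha}f(t)=\ ^{GL}_{\,t-L}D_{t}^{\alpha}f(t),
\end{equation*}
the right-hand side being an ordinary Gr$\ddot{\text{u}}$nwald-Letnikov derivative with the constant terminal $t-L$. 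Hence every pointwise identity valid for $\ ^{GL}_{a}D_{t}^{\alpha}$ with a constant terminal carries over verbatim upon setting $a=t-L$.

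I would then deduce the first identity from the classical expansion of the Gr$\ddot{\text{u}}$nwald-Letnikov derivative of a sufficiently smooth function (see \cite{2}): if $g$ is $n$-times differentiable on $[a,t]$ with $g^{(n)}\in L_{1}$ and $n>\alpha$, then
\begin{equation*}
\ ^{GL}_{a}D_{t}^{\alpha}g(t)=\sum_{k=0}^{n-1}\frac{g^{(k)}(a)\,(t-a)^{k-\alpha}}{\Gamma(k-\alpha+1)}+\frac{1}{\Gamma(n-\alpha)}\int_{a}^{t}(t-\tau)^{\,n-1-\alpha}\,g^{(n)}(\tau)\,d\tau .
\end{equation*}
Taking $n=m+1$ (admissible since $m-1\le\alpha<m$ forces $m+1>\alpha$), $g=f$ and $a=t-L$ — permissible because $[t-L,t]\subseteq[a-L,b]$ for every $t\in[a,b]$, so that $f^{(m+1)}\in L_{1}[a-L,b]$ indeed yields $f^{(m+1)}\in L_{1}[t-L,t]$ — and using $t-a=L$, one obtains the first identity of the proposition at once. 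If a self-contained derivation is preferred, the route is to use the equivalence of the Gr$\ddot{\text{u}}$nwald-Letnikov and Riemann-Liouville derivatives for such $f$, insert Taylor's formula for $f$ about $a$ with integral remainder into the Riemann-Liouville fractional integral, and integrate term by term; the only genuinely analytic point — where I expect the technical work to concentrate — is justifying the interchange of the limit, the finite sum and the integral, which is handled by dominated convergence together with $f^{(m+1)}\in L_{1}$.

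For the second identity I would start from the first and integrate its remainder term by parts $m$ times, at each step integrating the factor $(t-\tau)^{\,\cdot}$ and differentiating the factor $f^{(\,\cdot\,)}$: the boundary contribution at $\tau=t$ vanishes (the power of $t-\tau$ stays positive), while the boundary contribution at $\tau=t-L$ adds a further term of the form $L^{\,j-\alpha}f^{(j)}(t-L)/\Gamma(j-\alpha+1)$, and the surviving integral gains one unit in both the exponent of $t-\tau$ and the order of differentiation; after $m$ steps the exponent $m-\alpha$ has become $2m-\alpha$ and the factor $1/\Gamma(m-\alpha+1)$ has become $1/\Gamma(2m-\alpha+1)$, which is the structure displayed in the statement. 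The step I expect to be the main obstacle is precisely this bookkeeping, together with the regularity accounting: every Gamma argument must be kept away from the non-positive integers (harmless here, since $m-\alpha\in(0,1]$), and one must check that the smoothness actually used at each integration by parts is available — if the full chain of $m$ reductions is invoked, the hypothesis must be strengthened accordingly (for instance to $f\in C^{2m+1}$). Once the interchange of limit, sum and integral in the first part is secured, everything else is routine.
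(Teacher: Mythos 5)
Your handling of the first identity is fine and takes a genuinely shorter route than the paper: you reduce $\ _{\ \ L}^{^{MG}}D_{t}^{\alpha}f(t)=\ ^{GL}_{\,t-L}D_{t}^{\alpha}f(t)$ pointwise and then quote the classical expansion of the Gr\"unwald--Letnikov derivative with lower terminal $a=t-L$, $t-a=L$. The paper instead re-derives that classical expansion from scratch: it applies the Pascal identity $\binom{\alpha}{k}=\binom{\alpha-1}{k}+\binom{\alpha-1}{k-1}$ repeatedly to split the Gr\"unwald sum into $m+1$ boundary terms involving the finite differences $\Delta^{k}f(t-L+kh)$ plus a remainder sum in $\Delta^{m+1}f$, and then passes to the limit term by term using the Euler limit (\ref{e0}) for $\Gamma$ and a Toeplitz-type lemma cited from Podlubny. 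Your version buys brevity at the cost of leaning on the cited theorem; the paper's buys self-containedness.

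The genuine gap is the second displayed identity, and it is not one you can close, because that identity is false as stated. Take $f\equiv C$ with $C\neq 0$: then $f^{(m+1)}\equiv 0$, so the right-hand side of the second display is $0$, whereas the first identity (and Corollary (\ref{c1}) of the same paper) give $\ _{\ \ L}^{^{MG}}D_{t}^{\alpha}f=CL^{-\alpha}/\Gamma(1-\alpha)\neq 0$ for non-integer $\alpha$. Your integration-by-parts scheme also cannot formally produce the displayed formula: integrating the power $(t-\tau)^{m-\alpha}$ and differentiating $f^{(m+1)}$ does raise the exponent by one at each step, but it simultaneously raises the order of the derivative under the integral, so after $m$ steps the integrand is $(t-\tau)^{2m-\alpha}f^{(2m+1)}(\tau)$, not $(t-\tau)^{2m-\alpha}f^{(m+1)}(\tau)$; moreover each step contributes a new nonzero boundary term at $\tau=t-L$ that adds to, rather than cancels, the sum $\sum_{k=0}^{m}f^{(k)}(t-L)L^{k-\alpha}/\Gamma(k-\alpha+1)$. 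You half-noticed both obstructions but did not follow them to the conclusion that the target formula cannot hold. For comparison, the paper's own proof is silent on this point: it establishes only the first identity and stops, so the second equation (which even carries a duplicated label) is asserted without proof and appears to be simply erroneous.
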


\begin{proof}
Our aim is to evaluate the limit 
\begin{equation} \label{e11}
_{_{\ \ L}}^{^{MG}}D_{t}^{\alpha}f(t)=\lim_{h\rightarrow 0}\dfrac{1}{h^{\alpha}}\sum\limits_{k=0}^{\frac{L}{h}}(-1)^{k}\dfrac{\Gamma(\alpha+1)}{k!\Gamma(\alpha-k+1)}f(t-kh)=\lim_{h\rightarrow 0 }   f^{(\alpha)}_{h}(t),
\end{equation}
where 
\begin{equation}
f^{(\alpha)}_{h}(t)=h^{-\alpha}\sum\limits_{k=0}^{n}(-1)^{k}\dfrac{\Gamma(\alpha+1)}{k!\Gamma(\alpha-k+1)}f(t-kh), \ n=\dfrac{L}{h}.
\end{equation}
Noting 
\begin{equation}
\left( \begin{array}{c}
\alpha       \\
k
\end{array}\right)
=\dfrac{\Gamma(\alpha+1)}{k!\Gamma(\alpha-k+1)}
\end{equation}
then we have 
\begin{equation} \label{e12}
\left( \begin{array}{c}
\alpha       \\
k
\end{array}\right)
=
\left( \begin{array}{c}
\alpha -1      \\
k
\end{array}\right)
+\left( \begin{array}{c}
\alpha -1      \\
k-1
\end{array}\right)
\end{equation}

it follows that
 
\begin{eqnarray}
f^{(\alpha)}_{h}(t)=h^{-\alpha}\sum\limits_{k=0}^{n}(-1)^{k}\left( \begin{array}{c}
\alpha -1    \\
k
\end{array}\right)f(t-kh)+
h^{-\alpha}\sum\limits_{k=1}^{n}(-1)^{k}\left( \begin{array}{c}
\alpha -1      \\
k-1
\end{array}\right)f(t-kh) \ \ \ \ \ \ \ \ \ \ \ \\
=h^{-\alpha}\sum\limits_{k=0}^{n}(-1)^{k}\left( \begin{array}{c}
\alpha -1      \\
k
\end{array}\right)f(t-kh)+h^{-\alpha}\sum\limits_{k=0}^{n-1}(-1)^{k+1}\left( \begin{array}{c}
\alpha -1      \\
k
\end{array}\right)f(t-(k+1)h)\\
=(-1)^{n}\left( \begin{array}{c}
\alpha -1      \\
n
\end{array}\right)h^{-\alpha}f(t-L)
+h^{-\alpha}\sum\limits_{k=0}^{n-1}(-1)^{k}\left( \begin{array}{c}
\alpha -1      \\
k
\end{array}\right)\Delta f(t-kh), \label{e13}\ \ \ \ \ \ \ \ \ \ \ \ \ 
\end{eqnarray}

where we denote 
\begin{equation*}
\Delta f(t-kh)=f(t-kh)-f(t-(k+1)h).
\end{equation*}
Applying the property (\ref{e12}) of binomial coefficients repeated $m$ times, starting from \textbf{(\ref{e13})} we obtain:

\begin{eqnarray*}
f^{(\alpha)}_{h}(t)=(-1)^{n}\left( \begin{array}{c}
\alpha -1      \\
n
\end{array}\right)h^{-\alpha}f(t-L)
+(-1)^{n-1}\left( \begin{array}{c}
\alpha -2      \\
n-1
\end{array}\right)h^{-\alpha}\Delta f(t-L+h)\\ +h^{-\alpha}\sum\limits_{k=0}^{n-2}(-1)^{k}\left( \begin{array}{c}
\alpha -2      \\
k
\end{array}\right)\Delta^{2} f(t-kh), \label{e15}\ \ \ \ \ \ \ \ \ \ \ \ \ \ \ \ \ \ \ \ \ \ \ \ \ \ \ \ \ \ \ \ \  \\
=...\ \ \ \ \ \ \ \ \ \ \ \ \ \ \ \ \ \ \ \ \ \ \ \ \ \ \ \ \ \ \ \ \  \ \ \ \ \ \ \ \ \ \ \ \ \ \ \ \ \ \ \ \ \ \ \ \ \ \ \ \ \ \ \ \ \ \ \ \ \ \ \ \ \ \ \ \ \ \ \ \ \ \ \ \ \ \ \ \ \ \ \ \ 
\end{eqnarray*}

\begin{align}
=h^{-\alpha}\sum\limits_{k=0}^{m}(-1)^{n-k}\left( \begin{array}{c}
\alpha -k-1 \\
n-k
\end{array}\right)\Delta^{k} f(t-L+kh) \label{e14} \ \ \ \ \ \ \ \ \ \ \ \ \ \ \\  +h^{-\alpha}\sum\limits_{k=0}^{n-m-1}(-1)^{k}\left( \begin{array}{c}
\alpha -m-1      \\
k
\end{array}\right)\Delta^{m+1} f(t-kh) \label{e16}  \ \ \ \ \ \ \ \ \ \ \  
\end{align}
Let us evaluate the limit of the $k$th term in the sum (\ref{e14})

\begin{align*}
\lim_{h\rightarrow 0} h^{-\alpha}(-1)^{n-k}\left( \begin{array}{c}
\alpha -k-1 \\
n-k
\end{array}\right)\Delta^{k} f(t-L+kh) \ \ \ \ \ \ \ \ \ \ \ \ \ \ \ \ \ \ \ \ \ \ \ \ \ \ \ \ \\
= \lim_{h\rightarrow 0} (-1)^{n-k}\left( \begin{array}{c}
\alpha -k-1 \\
n-k
\end{array}\right)(n-k)^{\alpha-k} \ \ \ \ \ \ \ \ \ \ \ \ \ \ \ \ \ \ \ \ \ \ \ \ \ \ \ \  \\
\times(\dfrac{n}{n-k})^{\alpha-k}(nh)^{k-\alpha}\dfrac{\Delta^{k} f(t-L+kh)}{h^{k}}, \ \ n=\dfrac{L}{h}\rightarrow \infty \\
=L^{k-\alpha}\lim_{n\rightarrow \infty} (-1)^{n-k}\left( \begin{array}{c}
\alpha -k-1 \\
n-k
\end{array}\right)(n-k)^{\alpha-k} \ \ \ \ \ \ \ \ \ \ \ \ \ \ \ \ \ \ \ \  \\
\times \lim_{n\rightarrow \infty}(\dfrac{n}{n-k})^{\alpha-k}\lim_{h\rightarrow 0} \dfrac{\Delta^{k} f(t-L+kh)}{h^{k}},\ \ \ \ \ \ \ \ \ \ \ \ \ \ 
\end{align*}
\begin{align}
=\dfrac{f^{(k)}(t-L)L^{k-\alpha}}{\Gamma(k-\alpha+1)}, \label{e17}\ \ \ \ \ \ \ \ \ \ \ \ \ \ \ \ \ \ \ \ \ \ \ \ \ \ \ \ \ \ \ \ \ \ \ \ \ \ \ \ \ \ \ \ \ \ \ \ \
\end{align}
because we have
$$
\lim_{n\rightarrow \infty} (\dfrac{n}{n-k})^{\alpha-k}=1,\ \ \ \
\lim_{h\rightarrow 0} \dfrac{\Delta^{k} f(t-L+kh)}{h^{k}}=f^{(k)}(t-L),
$$
and from (\ref{e0}) we have
$$
\lim_{n\rightarrow \infty} (-1)^{n-k}\left( \begin{array}{c}
\alpha -k-1 \\
n-k
\end{array}\right)(n-k)^{\alpha-k}=\dfrac{1}{\Gamma(k-\alpha+1)}.
$$
To evaluate the limit of  the sum (\ref{e16}) let us write it in the form
$$
\dfrac{1}{\Gamma(m-\alpha+1)}\sum\limits_{k=0}^{n-m-1}(-1)^{k}\Gamma(m-\alpha+1)\left( \begin{array}{c}
\alpha -m-1      \\
k
\end{array}\right)k^{\alpha-m}
h(kh)^{m-\alpha}\dfrac{\Delta^{m+1} f(t-kh)}{h^{m+1}}. $$
Taking the notation  

\begin{align*}
\beta_{k}=(-1)^{k}\Gamma(m-\alpha+1)\left( \begin{array}{c}
\alpha -m-1      \\
k
\end{array}\right)k^{\alpha-m},\\
\alpha_{n,k}=h(kh)^{m-\alpha}\dfrac{\Delta^{m+1} f(t-kh)}{h^{m+1}}, \ \ \ h=\dfrac{L}{n}.  \ \ \ \ \ \ \ 
\end{align*}
Using (\ref{e0}) we obtain
\begin{equation} \label{e18}
\lim_{k\rightarrow\infty} \beta_{k}=1.
\end{equation}
Furthermore, if $m-\alpha>-1$, then 

\begin{align*}
\lim_{n\rightarrow\infty}\sum\limits_{k=0}^{n-m-1}\alpha_{n,k}=\lim_{h\rightarrow 0}\sum\limits_{k=0}^{n-m-1}h(kh)^{m-\alpha}\dfrac{\Delta^{m+1} f(t-kh)}{h^{m+1}},
\end{align*}
\begin{align}\label{e19}
\ \ \ \ \  \ \ \ \ \ =\int\limits_{t-L}^{t}(t-\tau)^{m-\alpha}f^{(m+1)}(\tau)d\tau.
\end{align}
Taking into account (\ref{e18}) and (\ref{e19}) and applying theorem (2.1) in \cite{2} we obtain 

\begin{align*}
\lim_{h\rightarrow 0}h^{-\alpha}\sum\limits_{k=0}^{n-m-1}(-1)^{k}\left( \begin{array}{c}
\alpha -m-1      \\
k
\end{array}\right)\Delta^{m+1} f(t-kh)
, \ \ \
\end{align*}
\begin{align}\label{e20}
= \dfrac{1}{\Gamma(m-\alpha+1)}\int\limits_{t-L}^{t}(t-\tau)^{m-\alpha}f^{(m+1)}(\tau)d\tau, \ \ nh=L 
\end{align}
Finally using (\ref{e17}) and (\ref{e20}) we conclude the limit (\ref{e6}).

\end{proof}

\begin{proposition}\label{p2} (Fractional derivative of a power function)\\
Let $\alpha> 0$, $L>0$,  $m$ an integer such that $m-1<\alpha<m$ and $f(t)=t^{n}$, then we have
\begin{equation}\label{e9}
\ _{\ \ L}^{^{MG}}D_{t}^{\alpha}(t^{n})=\sum\limits_{k=0}^{n}\dfrac{n!L^{k-\alpha}(t-L)^{n-k}}{(n-k)!\Gamma(k-\alpha+1)}.
\end{equation} 
\end{proposition}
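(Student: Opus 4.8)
The plan is to feed $f(t)=t^{n}$ into the evaluation formula of the preceding proposition and then simplify the right-hand side. Since $t^{n}$ is $C^{\infty}$, all the regularity hypotheses there hold automatically, and $f^{(k)}(s)=\frac{n!}{(n-k)!}\,s^{\,n-k}$ for $0\le k\le n$, while $f^{(k)}\equiv 0$ for $k>n$. Substituting these derivatives into the finite sum of that proposition produces exactly $\sum_{k=0}^{\min(m,n)}\frac{n!\,(t-L)^{n-k}L^{k-\alpha}}{(n-k)!\,\Gamma(k-\alpha+1)}$, so it only remains to account for the integral remainder term.

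First I would dispatch the easy case $n\le m$. Here $f^{(m+1)}\equiv 0$, so the integral term drops out, and the finite sum already ranges over $k=0,\dots,n$ (its terms for $n<k\le m$ vanishing identically). That is precisely the asserted expression $\eqref{e9}$, and there is nothing more to do.

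The case $n>m$ is where the actual work lies. I must show that the remainder integral $\dfrac{1}{\Gamma(m-\alpha+1)}\int_{t-L}^{t}(t-\tau)^{m-\alpha}f^{(m+1)}(\tau)\,d\tau$ reproduces the missing tail $\sum_{k=m+1}^{n}\frac{n!\,(t-L)^{n-k}L^{k-\alpha}}{(n-k)!\,\Gamma(k-\alpha+1)}$. Using $f^{(m+1)}(\tau)=\frac{n!}{(n-m-1)!}\tau^{\,n-m-1}$ and the change of variable $\tau=t-s$, this integral becomes $\dfrac{n!}{(n-m-1)!\,\Gamma(m-\alpha+1)}\int_{0}^{L}s^{m-\alpha}(t-s)^{\,n-m-1}\,ds$. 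I would then write $t-s=(t-L)+(L-s)$, expand $(t-s)^{n-m-1}$ by the binomial theorem in powers of $(L-s)$, integrate term by term with the Beta integral $\int_{0}^{L}s^{m-\alpha}(L-s)^{i}\,ds=L^{\,m-\alpha+i+1}\,\dfrac{\Gamma(m-\alpha+1)\,\Gamma(i+1)}{\Gamma(m-\alpha+i+2)}$ (legitimate since $m-\alpha>-1$), and cancel the binomial coefficients against the factorials. The substitution $i\mapsto k=m+1+i$ then collapses the result into $\sum_{k=m+1}^{n}\frac{n!\,(t-L)^{n-k}L^{k-\alpha}}{(n-k)!\,\Gamma(k-\alpha+1)}$; adding the finite sum (which for $n>m$ runs over the full range $k=0,\dots,m$) gives $\eqref{e9}$.

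The one genuinely delicate point is the bookkeeping in the $n>m$ case: keeping the binomial coefficients, factorials, and Gamma-function arguments aligned through the change of variable and the shift $i\mapsto k=m+1+i$, so that the two pieces telescope into a single sum $\sum_{k=0}^{n}$. Everything else reduces to a direct appeal to the previous proposition together with the standard Beta-function identity.
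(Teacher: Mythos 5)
Your proposal is correct and follows essentially the same route as the paper: substitute $f(t)=t^{n}$ into the evaluation formula of the preceding proposition and split into the cases $n\le m$ (where the remainder integral vanishes) and $n>m$ (where the remainder integral must be converted into the tail $\sum_{k=m+1}^{n}$ of the sum). The only divergence is in how that remainder integral is evaluated: the paper performs $(n-m-1)$ successive integrations by parts, each of which peels off one boundary term $\frac{f^{(k)}(t-L)L^{k-\alpha}}{\Gamma(k-\alpha+1)}$, whereas you obtain all the tail terms at once via the substitution $\tau=t-s$, the binomial expansion of $(t-s)^{n-m-1}$ about $t-L$, and the Beta integral; the two computations are equivalent and your bookkeeping (in particular the reindexing $k=m+1+i$ and the identity $\binom{n-m-1}{i}\,i!=\frac{(n-m-1)!}{(n-m-1-i)!}$) checks out.
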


\begin{proof}
If $n\leq m$, then $f^{(m+1)}\equiv 0$, substituting in (\ref{e6}) yields the relation (\ref{e9}).
If $n> m$, then from (\ref{e6}) and using $(n-m-1)$ successive integration by part we obtain the relation (\ref{e9}).

\end{proof}

\begin{corollary}\label{c1} (Fractional derivative of a constant function)\\
If $f$ is a constant function (i.e. $f(t)=C$ for all $t\in [a-L,b]$), then $\ _{\ \ L}^{^{MG}}D_{t}^{\alpha}f$ is a constant function for all $t\in [a,b]$).  Furthermore we have

\begin{equation}\label{e5}
\ _{\ \ L}^{^{MG}}D_{t}^{\alpha}f=\dfrac{C}{L^{\alpha}.\Gamma(1-\alpha)}.
\end{equation}

\end{corollary}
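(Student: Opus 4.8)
The plan is to deduce the corollary from the results already established, together with the obvious linearity of the operator. For every fixed step $h$ the finite sum in Definition~\ref{df1} depends linearly on $f$, and a limit of linear combinations is the linear combination of the limits; hence $f\mapsto\ _{\ \ L}^{^{MG}}D_{t}^{\alpha}f$ is linear, so it suffices to compute the derivative of the constant function $f\equiv 1$ and then multiply by $C$.

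For $f\equiv 1$ the cleanest route is Proposition~\ref{p2} with $n=0$: the right-hand side of (\ref{e9}) collapses to its single term $k=0$, namely $\dfrac{0!\,L^{-\alpha}(t-L)^{0}}{0!\,\Gamma(1-\alpha)}=\dfrac{L^{-\alpha}}{\Gamma(1-\alpha)}$, which carries no dependence on $t$. The same value also drops out of the integral representation (\ref{e6}): for a constant function $f^{(k)}\equiv 0$ for $k\ge 1$ and $f^{(m+1)}\equiv 0$, so the integral term vanishes and only the $k=0$ summand $f(t-L)L^{-\alpha}/\Gamma(1-\alpha)$ survives. Entirely from scratch one may even start from (\ref{e1}): pulling $C$ out of the sum and using the telescoping identity $\sum_{k=0}^{n}(-1)^{k}\binom{\alpha}{k}=(-1)^{n}\binom{\alpha-1}{n}$, a direct consequence of the Pascal-type relation (\ref{e12}), one rewrites $h^{-\alpha}=(n/L)^{\alpha}$ with $n=L/h$ and identifies $\lim_{n\to\infty}(-1)^{n}\binom{\alpha-1}{n}n^{\alpha}=1/\Gamma(1-\alpha)$ from the Euler limit (\ref{e0}) — this is the $k=0$ instance of the limit already used in the proof above.

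Multiplying by $C$ and invoking linearity gives $\ _{\ \ L}^{^{MG}}D_{t}^{\alpha}f=\dfrac{C}{L^{\alpha}\Gamma(1-\alpha)}$ for all $t\in[a,b]$, which is in particular a constant function, as asserted. There is essentially no obstacle here: the only point meriting a moment's attention is that the $t$-dependence on the right-hand sides of (\ref{e9}) and (\ref{e6}) genuinely disappears, which it does precisely because the sums reduce to their $k=0$ term (and for integer $\alpha$ the pole of $\Gamma$ forces the value $0$, recovering the classical fact). It is worth recording that, unlike the classical Gr$\ddot{\text{u}}$nwald--Letnikov or Caputo derivative, this operator sends a nonzero constant to a nonzero constant whenever $0<\alpha\notin\textbf{N}$, a harmless feature consistent with the lower terminal $t-L$ moving along with $t$.
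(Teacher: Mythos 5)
Your proposal is correct and, in its second route (substituting $f^{(k)}\equiv 0$ for $k\ge 1$ and $f^{(m+1)}\equiv 0$ into the representation (\ref{e6}) so that only the $k=0$ term $C L^{-\alpha}/\Gamma(1-\alpha)$ survives), it is exactly the paper's own proof. The additional derivations via Proposition \ref{p2} with $n=0$ and via the telescoping identity from (\ref{e12}) are also valid, but they do not change the substance of the argument.
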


\begin{proof}
Suppose that $f$ is a constant function ( $f(t)=C$ for all $t\in [a-L,b]$) , then  
$f^{(k)}(t-L)=f^{(m+1)}(\tau)= 0$ for $k=1,2,...,m$ substituting in (\ref{e6}) yields the relation (\ref{e5})
\end{proof}

\begin{proposition}\label{p3} (Fractional derivative of the exponential function)\\
Let $\alpha> 0$, $L>0$ and  $m$ an integer such that $m-1<\alpha<m$, then we have
\begin{equation}\label{e10}
\ _{\ \ L}^{^{MG}}D_{t}^{\alpha}(e^{t})=\dfrac{E_{1,1-\alpha}(L)}{L^{\alpha}e^{L}}e^{t}.
\end{equation} 
\end{proposition}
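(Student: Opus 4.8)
The plan is to specialize the closed-form expression (\ref{e6}) of the preceding proposition to $f(t)=e^{t}$; this is admissible because $e^{\tau}$ is bounded, hence lies in $L_{1}$, on the underlying bounded interval. Since $f^{(k)}(t)=e^{t}$ for every $k\geq 0$, both the boundary sum and the remainder integral in (\ref{e6}) collapse: with $f^{(k)}(t-L)=e^{t-L}$ and $f^{(m+1)}(\tau)=e^{\tau}$ one gets
\begin{equation*}
\ _{\ \ L}^{^{MG}}D_{t}^{\alpha}(e^{t})=e^{t-L}\sum_{k=0}^{m}\frac{L^{k-\alpha}}{\Gamma(k-\alpha+1)}+\frac{1}{\Gamma(m-\alpha+1)}\int_{t-L}^{t}(t-\tau)^{m-\alpha}e^{\tau}\,d\tau .
\end{equation*}
The change of variable $\tau=t-L+s$, $s\in[0,L]$, rewrites the integral as $e^{t-L}\int_{0}^{L}(L-s)^{m-\alpha}e^{s}\,ds$, so $e^{t-L}$ factors out of the whole right-hand side and it only remains to evaluate $I:=\frac{1}{\Gamma(m-\alpha+1)}\int_{0}^{L}(L-s)^{m-\alpha}e^{s}\,ds$.

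To compute $I$ I would expand $e^{s}=\sum_{j\geq 0}s^{j}/j!$ and interchange the sum with the integral, which is legitimate since the series converges uniformly on the compact interval $[0,L]$ and $(L-s)^{m-\alpha}$ is integrable there (recall $m-\alpha\in(0,1)$). Each resulting term is a Beta integral,
\begin{equation*}
\int_{0}^{L}(L-s)^{m-\alpha}s^{j}\,ds=L^{\,m-\alpha+j+1}\,\frac{\Gamma(j+1)\,\Gamma(m-\alpha+1)}{\Gamma(m-\alpha+j+2)},
\end{equation*}
and after the factorials and the factor $\Gamma(m-\alpha+1)$ cancel one is left with $I=\sum_{j\geq 0}L^{\,m-\alpha+j+1}/\Gamma(m-\alpha+j+2)$. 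Re-indexing by $i=m+j+1$ turns this into $L^{-\alpha}\sum_{i\geq m+1}L^{i}/\Gamma(i+1-\alpha)$.

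Writing the boundary sum likewise as $L^{-\alpha}\sum_{k=0}^{m}L^{k}/\Gamma(k+1-\alpha)$ and adding it to the re-indexed tail yields the single series $L^{-\alpha}\sum_{k\geq 0}L^{k}/\Gamma(k+1-\alpha)$, which by the definition of the two-parameter Mittag-Leffler function recalled above equals $L^{-\alpha}E_{1,1-\alpha}(L)$. Restoring the prefactor $e^{t-L}$ gives
\begin{equation*}
\ _{\ \ L}^{^{MG}}D_{t}^{\alpha}(e^{t})=L^{-\alpha}e^{t-L}E_{1,1-\alpha}(L)=\frac{E_{1,1-\alpha}(L)}{L^{\alpha}e^{L}}\,e^{t},
\end{equation*}
which is exactly (\ref{e10}). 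As a consistency check one may instead apply Proposition \ref{p2} term by term to $e^{t}=\sum_{n\geq 0}t^{n}/n!$ and resum the resulting double series (interchanging the order of summation via $j=n-k$); the only delicate point there is justifying that the operator commutes with the infinite sum. I expect the main obstacle to be essentially bookkeeping: keeping the arguments of the Gamma functions straight through the Beta-integral evaluation and, above all, carrying out the re-indexing that fuses the finite boundary sum with the infinite remainder into a single Mittag-Leffler series. The genuinely analytic ingredients — uniform convergence of the exponential series and the value of the Beta integral — are routine.
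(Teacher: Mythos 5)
Your proof is correct, but it takes a genuinely different route from the paper. The paper's own proof is precisely the argument you relegate to a ``consistency check'': it expands $e^{t}=\sum_{p\geq 0}t^{p}/p!$, applies the power rule of Proposition~\ref{p2} term by term, regroups the double sum by powers of $L$, and reads off $e^{t-L}L^{-\alpha}\sum_{p\geq 0}L^{p}/\Gamma(p+1-\alpha)=e^{t-L}L^{-\alpha}E_{1,1-\alpha}(L)$. You instead specialize the integral representation (\ref{e6}) to $f=e^{t}$, factor $e^{t-L}$ out of both the boundary sum and the remainder integral, evaluate the latter by expanding $e^{s}$ and using Beta integrals, and then fuse the finite sum $\sum_{k=0}^{m}$ with the re-indexed tail $\sum_{i\geq m+1}$ into the single Mittag-Leffler series; all the Gamma-function bookkeeping in your computation checks out. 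What your route buys is exactly what you identify: it sidesteps the interchange of $\ _{\ \ L}^{^{MG}}D_{t}^{\alpha}$ with an infinite sum, which the paper performs without justification, replacing it with the entirely routine interchange of $\sum_{j}s^{j}/j!$ with a proper integral over the compact interval $[0,L]$. The price is that your argument leans on the validity of (\ref{e6}) for $f=e^{t}$ (harmless, since $e^{t}$ is smooth and bounded on $[a-L,b]$) and on slightly heavier bookkeeping; the paper's route is shorter on paper but analytically less careful.
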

\begin{proof}
We have $e^{t}=\sum\limits_{p=0}^{\infty}\dfrac{t^{p}}{p!}$, then
\begin{eqnarray*}
\ _{\ \ L}^{^{MG}}D_{t}^{\alpha}(e^{t})&=\sum\limits_{p=0}^{\infty}\dfrac{\ _{\ \ L}^{^{MG}}D_{t}^{\alpha}(t^{p})}{p!}\ \ \ \ \ \ \ \ \ \ \ \ \ \ \ \  \ \ \ \ \ \ \ \ \ \ \ \ \ \ \  \ \ \ \ \ \ \ \ \ \ \ \ \ \ \  \\
&=\sum\limits_{p=0}^{\infty}\sum\limits_{k=0}^{p}\dfrac{L^{k-\alpha}(t-L)^{p-k}}{(p-k)!\Gamma(k-\alpha+1)} \ \ \ \ \ \ \ \ \ \ \ \ \ \ \ \ \ \ \ \ \ \ \ \ \ \ \ \ \ \  \\
&=\dfrac{L^{-\alpha}}{\Gamma(1-\alpha)}e^{t-L}+\dfrac{L^{1-\alpha}}{\Gamma(2-\alpha)}e^{t-L}+\dfrac{L^{2-\alpha}}{\Gamma(3-\alpha)}e^{t-L}+...\\
&=e^{t-L}\sum\limits_{p=0}^{\infty}\dfrac{L^{p-\alpha}}{\Gamma(p+1-\alpha)}\ \ \ \ \ \ \ \ \ \ \ \ \ \ \ \  \ \ \ \ \ \ \ \ \ \ \ \ \  \ \ \ \ \ \ \ \ \  \\
&=e^{t-L}L^{-\alpha}\sum\limits_{p=0}^{\infty}\dfrac{L^{p}}{\Gamma(p+1-\alpha)} \ \ \ \ \ \ \ \ \ \ \ \  \ \ \ \ \ \ \ \ \ \ \ \ \ \ \  \ \ \ \ \ \ \ \\
&=\dfrac{E_{1,1-\alpha}(L)}{L^{\alpha}e^{L}}e^{t}.\ \ \ \ \ \ \ \ \ \ \ \  \ \ \ \ \ \ \ \ \ \ \ \ \ \ \  \ \ \ \ \ \ \  \ \ \ \ \ \ \ \ \ \ \ \  \ \  \ \ \ \ 
\end{eqnarray*}
\end{proof}

\subsection{An interpolation property}
The fractional-order derivative mean the extension of the integer-order derivative to real order, in other word the fractional-order operator $D_{t}^{\alpha}$ is an interpolation of the integer-order operator $\dfrac{d^{n}}{dt^{n}}$, so it is naturally to ask the question: is the proposed Gr$\ddot{\text{u}}$nwald-Letnikov fractional-order derivative with fixed memory length an extension of the integer-order derivative? The following proposition gives the answer to this question.

\begin{proposition}\label{p1}
Let $\alpha> 0$, $L>0$,  $m$ an integer such that $m-1<\alpha<m$ and $f$ an $m$-differentiable function on $[a-L,b]$. Then for all $t\in [a,b]$ we have
\begin{equation} \label{e3}
\lim\limits_{\alpha\rightarrow m }  \ _{\ \ L}^{^{MG}}D_{t}^{\alpha}f(t)=f^{(m)}(t).
\end{equation}
And
\begin{equation} \label{e4}
\lim\limits_{\alpha\rightarrow m-1 }  \ _{\ \ L}^{^{MG}}D_{t}^{\alpha}f(t)=f^{(m-1)}(t).
\end{equation}
\end{proposition}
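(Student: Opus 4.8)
The plan is to read both limits off the closed-form representation (\ref{e6}) established in the previous proposition and to pass to the limit term by term, exploiting the single analytic fact that the reciprocal Gamma function $1/\Gamma$ is entire and vanishes at every non-positive integer. (To stay strictly within the present hypotheses, where $f$ is only assumed $m$-differentiable while (\ref{e6}) needs $f^{(m+1)}\in L_1$, one first proves the statement for $(m+1)$-differentiable $f$ and then removes the extra smoothness by approximating $f$ in the appropriate norm; I would carry out the computation in the smooth case and treat the density step as routine.)

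First I would treat the limit $\alpha\to m^{-}$. In the finite sum $\sum_{k=0}^{m}\frac{f^{(k)}(t-L)L^{k-\alpha}}{\Gamma(k-\alpha+1)}$, for each $k=0,1,\dots,m-1$ the argument $k-\alpha+1$ tends to the non-positive integer $k-m+1$, so $\Gamma(k-\alpha+1)\to\infty$ and that term tends to $0$; only the $k=m$ term survives, and since $L^{m-\alpha}\to 1$ and $\Gamma(m-\alpha+1)\to\Gamma(1)=1$, it tends to $f^{(m)}(t-L)$. For the integral remainder, $(t-\tau)^{m-\alpha}\to 1$ pointwise and stays bounded on $[t-L,t]$ uniformly for $\alpha$ near $m$, while $f^{(m+1)}\in L_1$, so dominated convergence gives $\frac{1}{\Gamma(m-\alpha+1)}\int_{t-L}^{t}(t-\tau)^{m-\alpha}f^{(m+1)}(\tau)\,d\tau\to\int_{t-L}^{t}f^{(m+1)}(\tau)\,d\tau=f^{(m)}(t)-f^{(m)}(t-L)$. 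Adding the two contributions gives $f^{(m)}(t)$, which is (\ref{e3}).

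For the limit $\alpha\to(m-1)^{+}$ the same bookkeeping applies. The terms $k=0,\dots,m-2$ vanish because $k-\alpha+1\to k-m+2$ is a non-positive integer; the $k=m-1$ term tends to $f^{(m-1)}(t-L)$ (since $L^{m-1-\alpha}\to1$ and $\Gamma(m-\alpha)\to\Gamma(1)=1$); and the $k=m$ term tends to $L\,f^{(m)}(t-L)$ (since $L^{m-\alpha}\to L$ and $\Gamma(m-\alpha+1)\to\Gamma(2)=1$). The integral remainder tends, again by dominated convergence, to $\int_{t-L}^{t}(t-\tau)f^{(m+1)}(\tau)\,d\tau$, which one integration by parts rewrites as $-L\,f^{(m)}(t-L)+f^{(m-1)}(t)-f^{(m-1)}(t-L)$. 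Summing the four pieces, $f^{(m-1)}(t-L)$ and $L\,f^{(m)}(t-L)$ cancel against their counterparts and only $f^{(m-1)}(t)$ remains, which is (\ref{e4}).

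The main obstacle is not the algebra — once one believes (\ref{e6}), the result is the telescoping identity $f^{(m)}(t-L)+\bigl(f^{(m)}(t)-f^{(m)}(t-L)\bigr)=f^{(m)}(t)$ together with its one-order-lower version after an integration by parts. The real content lies in the two soft analytic points: (i) that the unwanted Gamma quotients indeed kill the lower-index terms, which is exactly the statement that $1/\Gamma$ has zeros at the non-positive integers; and (ii) that $\lim_{\alpha}$ may be moved inside the remainder integral, which I would justify by the uniform bound $|(t-\tau)^{m-\alpha}|\le \max\{1,L^{1}\}$ on the compact interval $[t-L,t]$ for $\alpha$ in a small neighbourhood of $m$ (resp. $m-1$) combined with $f^{(m+1)}\in L_1$ and dominated convergence. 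The density argument reducing the general $m$-differentiable case to the $(m+1)$-differentiable one is the only place where a little extra care is needed, since one must keep control of $f^{(m)}$ at the endpoint $t-L$ as well as in the interior.
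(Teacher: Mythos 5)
Your computation is essentially correct, but it takes a genuinely different route from the paper. The paper never touches the integral representation: it argues directly from Definition~\ref{df1}, pulling $\lim_{\alpha\to m}$ inside the $h$-limit and the defining sum, observing that $\lim_{\alpha\to m}\binom{\alpha}{k}=\binom{m}{k}=0$ for $k>m$ so that the sum truncates to its first $m+1$ terms, and recognizing what remains as the classical backward-difference quotient $h^{-m}\sum_{k=0}^{m}(-1)^{k}\binom{m}{k}f(t-kh)$ for $f^{(m)}(t)$; the case $\alpha\to m-1$ is dispatched ``by a similar procedure.'' You instead read both limits off formula (\ref{e6}), using the zeros of $1/\Gamma$ at the non-positive integers to kill the lower-index boundary terms, and dominated convergence plus the fundamental theorem of calculus (respectively one integration by parts) to evaluate the remainder; I checked the bookkeeping for both $\alpha\to m^{-}$ and $\alpha\to(m-1)^{+}$ and the telescoping is exactly right. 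What each buys: the paper's route formally needs only $m$-differentiability, matching the stated hypothesis, but its interchange of $\lim_{\alpha}$ with a sum of length $L/h\to\infty$ is itself unjustified; your route is analytically cleaner where it applies, but it inherits the stronger hypothesis $f^{(m+1)}\in L_{1}[a-L,b]$ from the preceding proposition.

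That hypothesis mismatch is the one real gap in your write-up. You acknowledge it and defer to a density argument, but calling that step routine undersells it: to pass from $(m+1)$-differentiable approximants $g$ to a merely $m$-differentiable $f$ you need a bound on $\left|\ _{\ \ L}^{^{MG}}D_{t}^{\alpha}(f-g)(t)\right|$ that is uniform in $\alpha$ near $m$ (resp.\ $m-1$) and controlled by a norm involving only $m$ derivatives of $f-g$, and the very formula (\ref{e6}) you work from expresses the derivative through $(f-g)^{(m+1)}$, so it cannot supply such a bound directly. The repair is to integrate the remainder in (\ref{e6}) by parts once more, which cancels the $k=m$ boundary term and yields
\begin{equation*}
\ _{\ \ L}^{^{MG}}D_{t}^{\alpha}f(t)=\sum\limits_{k=0}^{m-1}\dfrac{f^{(k)}(t-L)L^{k-\alpha}}{\Gamma(k-\alpha+1)}+\dfrac{1}{\Gamma(m-\alpha)}\int\limits_{t-L}^{t}(t-\tau)^{m-\alpha-1}f^{(m)}(\tau)\,d\tau ,
\end{equation*}
an expression involving only $f^{(m)}$ and admitting the uniform estimate $\frac{L^{m-\alpha}}{\Gamma(m-\alpha+1)}\|f^{(m)}-g^{(m)}\|_{\infty}$ for the difference; with that in hand the density step (and indeed the whole proof, applied directly to $f$) goes through. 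As written, however, this reduction is asserted rather than proved.
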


\begin{proof}
We have
\begin{eqnarray*}
\lim\limits_{\alpha\rightarrow m }  \ _{\ \ L}^{^{MG}}D_{t}^{\alpha}f(t)&=\lim\limits_{\alpha\rightarrow m }\left[ \lim\limits_{h\rightarrow 0}\dfrac{1}{h^{\alpha}}\sum\limits_{k=0}^{\frac{L}{h}}(-1)^{k}\dfrac{\Gamma(\alpha+1)}{k!\Gamma(\alpha-k+1)}f(t-kh)\right],\\
&=\lim\limits_{h\rightarrow 0}\sum\limits_{k=0}^{\frac{L}{h}}(-1)^{k}\lim\limits_{\alpha\rightarrow m }\left[\dfrac{1}{h^{\alpha}}\dfrac{\Gamma(\alpha+1)}{k!\Gamma(\alpha-k+1)}\right] f(t-kh),
\end{eqnarray*}

when $k>m$ then $\lim\limits_{\alpha\rightarrow m }\dfrac{\Gamma(\alpha+1)}{k!\Gamma(\alpha-k+1)}=\left( \begin{array}{c}
m      \\
k
\end{array}\right)=0$, thus
\begin{eqnarray*}
\lim\limits_{\alpha\rightarrow m }  \ _{\ \ L}^{^{MG}}D_{t}^{\alpha}f(t)&=\lim\limits_{h\rightarrow 0}\dfrac{1}{h^{m}}\sum\limits_{k=0}^{m}(-1)^{k}\dfrac{\Gamma(m+1)}{k!\Gamma(m-k+1)}f(t-kh),\\
&=\lim\limits_{h\rightarrow 0}\dfrac{1}{h^{m}}\sum\limits_{k=0}^{m}(-1)^{k}\dfrac{m!}{k!(m-k)!}f(t-kh),\ \ \ \ \ \ \\
&=f^{(m)}(t).\ \ \ \ \ \ \ \ \ \ \ \ \ \ \ \ \ \ \ \ \ \ \ \ \ \ \ \ \ \ \ \ \ \ \ \ \ \ \ \ \ \ \ \ \ 
\end{eqnarray*}
By similar procedure we prove that 
\begin{equation*} 
\lim\limits_{\alpha\rightarrow m-1 }  \ _{\ \ L}^{^{MG}}D_{t}^{\alpha}f(t)=f^{(m-1)}(t).
\end{equation*}
Thus the fractional-order operator $ \ _{\ \ L}^{^{MG}}D_{t}^{\alpha}$ is an interpolation of the integer-order operator $\dfrac{d^{n}}{dt^{n}}$.
\end{proof}
\begin{example}
Let $\alpha> 0$, $L>0$,  $m$ an integer such that $m-1< \alpha<m$ and suppose that $f(t)=C$ is a constant function  then from corollary \textbf{(\ref{c1})} we have
\begin{equation}\label{e7}
\lim\limits_{\alpha\rightarrow m } \ _{\ \ L}^{^{MG}}D_{t}^{\alpha}f(t)=\lim\limits_{\alpha\rightarrow m } \dfrac{C}{L^{\alpha}.\Gamma(1-\alpha)}=0=f^{(m)}(t),
\end{equation} 
and
\begin{equation}\label{e8}
\lim\limits_{\alpha\rightarrow 0 } \ _{\ \ L}^{^{MG}}D_{t}^{\alpha}f(t)=\lim\limits_{\alpha\rightarrow 0 } \dfrac{C}{L^{\alpha}.\Gamma(1-\alpha)}=C=f(t).
\end{equation} 
\end{example}

\subsection{Derivative of periodic function}
The main result of this paper as previously announced is the introduction of new modified definition of fractional derivative with finite fixed memory length which preserves the periodicity, this result is summarized in the following theorem
\begin{theorem}
Let $\alpha> 0$, $L>0$,  $m$ an integer such that $m-1< \alpha<m$ and $f$  a function defined on $[a-L,b]$ such that $\ _{\ \ L}^{^{MG}}D_{t}^{\alpha}f$ exists.
If $f$ is a periodic function with period $T$ (i.e. $f(t+T)=f(t)$ for all $t\in [a-L,b]$), then $\ _{\ \ L}^{^{MG}}D_{t}^{\alpha}f$ is a periodic function with the same period $T$.
\end{theorem}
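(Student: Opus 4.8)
The plan is to work directly from the defining limit in Definition \ref{df1}, exploiting the structural feature that distinguishes the new operator from the classical one: the number of summands, $L/h$, depends only on the fixed memory length $L$ and the step $h$, never on the evaluation point $t$. This is exactly what fails for the classical Gr\"unwald--Letnikov operator \eqref{e23}, whose upper summation index $\frac{x-a}{h}$ grows with $x$, so that a shift of $x$ by $T$ genuinely alters the partial sums even when $f$ is $T$-periodic.

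First I would fix $t$ for which both $t$ and $t+T$ lie in $[a,b]$ (shrinking $b$ if necessary, i.e.\ establishing the identity on the largest interval on which both members are defined), and expand
$${}_{\ \ L}^{^{MG}}D_{t}^{\alpha}f(t+T)=\lim_{h\rightarrow 0}\frac{1}{h^{\alpha}}\sum_{k=0}^{L/h}(-1)^{k}\frac{\Gamma(\alpha+1)}{k!\,\Gamma(\alpha-k+1)}\,f(t-kh+T).$$
Since $f$ is $T$-periodic on $[a-L,b]$ and every point $t-kh+T$ with $0\le k\le L/h$ is the translate by one period of $t-kh\in[a-L,b]$, we have $f(t-kh+T)=f(t-kh)$ for each index $k$ occurring in the sum. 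Replacing the summands one by one produces exactly the partial sum that defines ${}_{\ \ L}^{^{MG}}D_{t}^{\alpha}f(t)$; letting $h\to 0$ gives ${}_{\ \ L}^{^{MG}}D_{t}^{\alpha}f(t+T)={}_{\ \ L}^{^{MG}}D_{t}^{\alpha}f(t)$, which is the assertion. The hypothesis that the derivative exists means both sides are limits of the very same sequence of partial sums, so convergence needs no separate argument.

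An alternative route, available whenever $f$ is regular enough for the integral representation \eqref{e6} to hold, is to start from that formula instead: periodicity of $f$ forces periodicity of each derivative $f^{(k)}$, whence $f^{(k)}(t+T-L)=f^{(k)}(t-L)$, while the substitution $\tau\mapsto\tau-T$ turns $\int_{t+T-L}^{t+T}(t+T-\tau)^{m-\alpha}f^{(m+1)}(\tau)\,d\tau$ into $\int_{t-L}^{t}(t-\sigma)^{m-\alpha}f^{(m+1)}(\sigma+T)\,d\sigma=\int_{t-L}^{t}(t-\sigma)^{m-\alpha}f^{(m+1)}(\sigma)\,d\sigma$ after one further use of periodicity; adding the two contributions gives the same conclusion.

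There is essentially no obstacle here: the statement is almost a restatement of the definition. The only point that deserves attention is bookkeeping about domains — verifying that every argument $t-kh+T$ at which periodicity is invoked really lies in $[a-L,b]$, and stating precisely on which interval the periodicity conclusion is claimed, namely the intersection of the domains of ${}_{\ \ L}^{^{MG}}D_{t}^{\alpha}f(\cdot)$ and ${}_{\ \ L}^{^{MG}}D_{t}^{\alpha}f(\cdot+T)$.
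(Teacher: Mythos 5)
Your proposal is correct and follows essentially the same route as the paper: expand ${}_{\ \ L}^{^{MG}}D_{t}^{\alpha}f(t+T)$ via the defining limit, use $f(t+T-kh)=f(t-kh)$ term by term (possible precisely because the upper summation index $L/h$ is independent of $t$), and conclude. Your added remarks on domain bookkeeping and the alternative argument via the integral representation \eqref{e6} are sensible refinements, but the core argument coincides with the paper's proof.
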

\begin{proof}
Suppose that $f$ is a periodic function with period $T$, then we have 

\begin{eqnarray*} \label{e2}
_{\ \ L}^{^{MG}}D_{t}^{\alpha}f(t+T)&=\lim\limits_{h\rightarrow 0}\dfrac{1}{h^{\alpha}}\sum\limits_{k=0}^{\frac{L}{h}}(-1)^{k}\dfrac{\Gamma(\alpha+1)}{k!\Gamma(\alpha-k+1)}f(t+T-kh),\\
&= \lim\limits_{h\rightarrow 0}\dfrac{1}{h^{\alpha}}\sum\limits_{k=0}^{\frac{L}{h}}(-1)^{k}\dfrac{\Gamma(\alpha+1)}{k!\Gamma(\alpha-k+1)}f(t-kh).\ \ \ \ \ \ \\
&= \ _{\ \ L}^{^{MG}}D_{t}^{\alpha}f(t). \ \ \ \ \ \ \ \ \ \ \ \ \ \ \ \ \ \ \ \ \ \ \ \ \ \ \ \ \ \ \  \ \ \ \ \ \ \ \ \ \ \ \ \ \
\end{eqnarray*} 
Hence $\ _{\ \ L}^{^{MG}}D_{t}^{\alpha}f$ is a periodic function with period $T$.
\end{proof}

\begin{example} (Fractional derivative of sinus function)\\
We have $sin(t)=\sum\limits_{p=0}^{\infty}(-1)^{p}\dfrac{t^{2p+1}}{(2p+1)!}$, then
\begin{align*}
\ _{\ \ L}^{^{MG}}D_{t}^{\alpha}sin(t)=\sum\limits_{p=0}^{\infty}(-1)^{p}\dfrac{\ _{\ \ L}^{^{MG}}D_{t}^{\alpha}(t^{2p+1})}{(2p+1)!}
\end{align*}
using the relation \textbf{(\ref{e9})} we obtain
\begin{eqnarray}
\ _{\ \ L}^{^{MG}}D_{t}^{\alpha}sin(t)&=\sum\limits_{p=0}^{\infty}\sum\limits_{k=0}^{2p+1}(-1)^{p}\dfrac{L^{k-\alpha}(t-L)^{2p+1-k}}{(2p+1-k)!\Gamma(k-\alpha+1)} \ \ \ \ \ \ \ \ \ \ \ \ \ \ \ \ \ \ \ \ \ \ \ \ \ \ \ \ \ \ \ \ \ \ \ \ \ \ \ \ \nonumber \\
&=\dfrac{L^{-\alpha}}{\Gamma(1-\alpha)}sin(t-L)+\dfrac{L^{1-\alpha}}{\Gamma(2-\alpha)}cos(t-L)-\dfrac{L^{2-\alpha}}{\Gamma(3-\alpha)}sin(t-L)\ \ \ \ \ \ \ \ \ \nonumber \\
&-\dfrac{L^{3-\alpha}}{\Gamma(4-\alpha)}cos(t-L)+...\ \ \ \ \ \ \ \ \ \ \ \ \ \ \ \ \ \ \ \ \ \ \ \ \ \ \ \ \ \ \ \ \ \ \ \ \ \ \ \ \ \ \ \ \ \ \ \ \ \ \ \ \ \ \nonumber  \\
&=sin(t-L)\sum\limits_{p=0}^{\infty}(-1)^{p}\dfrac{L^{2p-\alpha}}{\Gamma(2p+1-\alpha)}+cos(t-L)\sum\limits_{p=0}^{\infty}(-1)^{p}\dfrac{L^{2p+1-\alpha}}{\Gamma(2p+2-\alpha)}\nonumber \\
&=L^{-\alpha}sin(t-L)E_{2,1-\alpha}(-L^{2})+L^{1-\alpha}cos(t-L)E_{2,2-\alpha}(-L^{2})\ \ \ \ \ \ \  \ \ \ \ \ \ \nonumber \\
&=a \sin(t-L)+b \cos(t-L),\ \ \ \ \ \ \  \ \ \ \ \ \ \ \ \ \ \ \ \  \ \ \ \ \ \ \ \ \ \ \ \ \  \ \ \ \ \ \ \ \ \ \ \ \ \  \ \ \ \ \label{e26}
\end{eqnarray}
where $a=L^{-\alpha}E_{2,1-\alpha}(-L^{2})$ and $b=L^{1-\alpha}E_{2,2-\alpha}(-L^{2})$.\\
Clearly $\ _{\ \ L}^{^{MG}}D_{t}^{\alpha}sin(t)$ is a periodic function with period $2\pi$.\\
Figure \ref{fg2} shows the modified fractional-order derivative $\ _{\ \ L}^{^{MG}}D_{t}^{\alpha}sin(t)$ of sine function for $L=30$ and some value of $\alpha$ which confirms the analytic result obtained.
\end{example}
\begin{example} (Fractional derivative of cosines function)\\
We have $cos(t)=\sum\limits_{p=0}^{\infty}(-1)^{p}\dfrac{t^{2p}}{(2p)!}$, then
\begin{eqnarray}
\ _{\ \ L}^{^{MG}}D_{t}^{\alpha}cos(t)&=\sum\limits_{p=0}^{\infty}(-1)^{p}\dfrac{\ _{\ \ L}^{^{MG}}D_{t}^{\alpha}(t^{2p})}{(2p)!}\ \ \ \ \ \ \ \ \ \ \ \ \ \ \ \  \ \ \ \ \ \ \ \ \ \ \ \ \ \ \  \ \ \ \ \ \ \ \ \ \ \ \ \ \ \ \ \ \ \ \ \ \ \ \ \ \ \ \ \  \nonumber  \\
&=\sum\limits_{p=0}^{\infty}\sum\limits_{k=0}^{2p}(-1)^{p}\dfrac{L^{k-\alpha}(t-L)^{2p-k}}{(2p-k)!\Gamma(k-\alpha+1)} \ \ \ \ \ \ \ \ \ \ \ \ \ \ \ \ \ \ \ \ \ \ \ \ \ \ \ \ \ \ \ \ \ \ \ \ \ \ \ \ \nonumber  \\
&=\dfrac{L^{-\alpha}}{\Gamma(1-\alpha)}cos(t-L)-\dfrac{L^{1-\alpha}}{\Gamma(2-\alpha)}sin(t-L)-\dfrac{L^{2-\alpha}}{\Gamma(3-\alpha)}cos(t-L)\ \ \ \ \ \ \ \ \  \nonumber  \\
&+\dfrac{L^{3-\alpha}}{\Gamma(4-\alpha)}sin(t-L)+...\ \ \ \ \ \ \ \ \ \ \ \ \ \ \ \ \ \ \ \ \ \ \ \ \ \ \ \ \ \ \ \ \ \ \ \ \ \ \ \ \ \ \ \ \ \ \ \ \ \ \ \ \ \  \nonumber  \\
&=cos(t-L)\sum\limits_{p=0}^{\infty}(-1)^{p}\dfrac{L^{2p-\alpha}}{\Gamma(2p-\alpha)}-sin(t-L)\sum\limits_{p=0}^{\infty}(-1)^{p}\dfrac{L^{2p+1-\alpha}}{\Gamma(2p+2-\alpha)} \nonumber \\
&=L^{-\alpha}cos(t-L)E_{2,1-\alpha}(-L^{2})+L^{1-\alpha}sin(t-L)E_{2,2-\alpha}(-L^{2})\ \ \ \ \ \ \  \ \ \ \ \ \ \nonumber  \\
&=a \cos(t-L)-b \sin(t-L).\ \ \ \ \ \ \  \ \ \ \ \ \ \ \ \ \ \ \ \  \ \ \ \ \ \ \ \ \ \ \ \ \  \ \ \ \ \ \ \ \ \ \ \ \ \  \ \ \ \ \label{e27}
\end{eqnarray}
where $a=L^{-\alpha}E_{2,1-\alpha}(-L^{2})$ and $b=L^{1-\alpha}E_{2,2-\alpha}(-L^{2})$.\\
Clearly $\ _{\ \ L}^{^{MG}}D_{t}^{\alpha}cos(t)$ is a periodic function with period $2\pi$.\\
\end{example}
\begin{remark}
When the memory length $L$ increases its influence on the derivative decreases, as shown in Figure \ref{fg3}, thus in order to obtain an independent fractional derivative from $L$ it is recommended to chose $L$ greater as possible. 
\end{remark}
\subsection{Fractional-order autonomous system with exact periodic solution}
As previously mentioned the fractional-order autonomous system expressed in term of classical fractional derivatives can not have exact periodic solutions.
In this subsection we prove that the fractional-order autonomous system expressed in term of our proposed fractional derivative can have exact periodic solution. For this purpose we give the following example.
\begin{example}
Let consider the following linear fractional-order autonomous system 

\begin{equation}\label{e24}
\ _{\ \ 2\pi}^{^{MG}}D_{t}^{\alpha}X(t)=AX(t)
\end{equation}

where $X(t)\in R^{2}$ and 
$
A=\left[ \begin{array}{c c}
a& -b \\
b& a
\end{array}\right]
$\\ 
with $a=(2\pi)^{-\alpha}E_{2,1-\alpha}(-4\pi^{2})$ and $b=(2\pi)^{1-\alpha}E_{2,2-\alpha}(-4\pi^{2})$.\\
The vector function 
$$X(t)=c\left( \begin{array}{c}
cos(t)\\
sin(t)
\end{array}\right),\  c\in R$$ 
is an exact $2\pi$-periodic solution of (\ref{e24}), namely we have  
$$\ _{\ \ 2\pi}^{^{MG}}D_{t}^{\alpha}X(t)=c\left( \begin{array}{c}
\ _{\ \ 2\pi}^{^{MG}}D_{t}^{\alpha}cos(t)\\
\\
\ _{\ \ 2\pi}^{^{MG}}D_{t}^{\alpha}sin(t)
\end{array}\right)$$
then from (\ref{e26}) and (\ref{e27}) we get

\begin{align}
\ _{\ \ 2\pi}^{^{MG}}D_{t}^{\alpha}X(t)&=c\left( \begin{array}{c}
a \cos(t-2\pi)-b \sin(t-2\pi)\\
\\
a \sin(t-2\pi)+b \cos(t-2\pi)
\end{array}\right) \nonumber\\
&=c\left( \begin{array}{c}
a \cos(t)-b \sin(t)\nonumber\\
\\
a \sin(t)+b \cos(t)
\end{array}\right)\nonumber\\
&=A X(t)\nonumber
\end{align}
then $X(t)=c\left( \begin{array}{c}
cos(t)\\
sin(t)
\end{array}\right)$ 
is an exact $2\pi$-periodic solution of (\ref{e24}).
\end{example}

\newpage
\begin{figure}
[ptb]
\begin{center}
\includegraphics[
height=1.7in,
width=5in
]%
{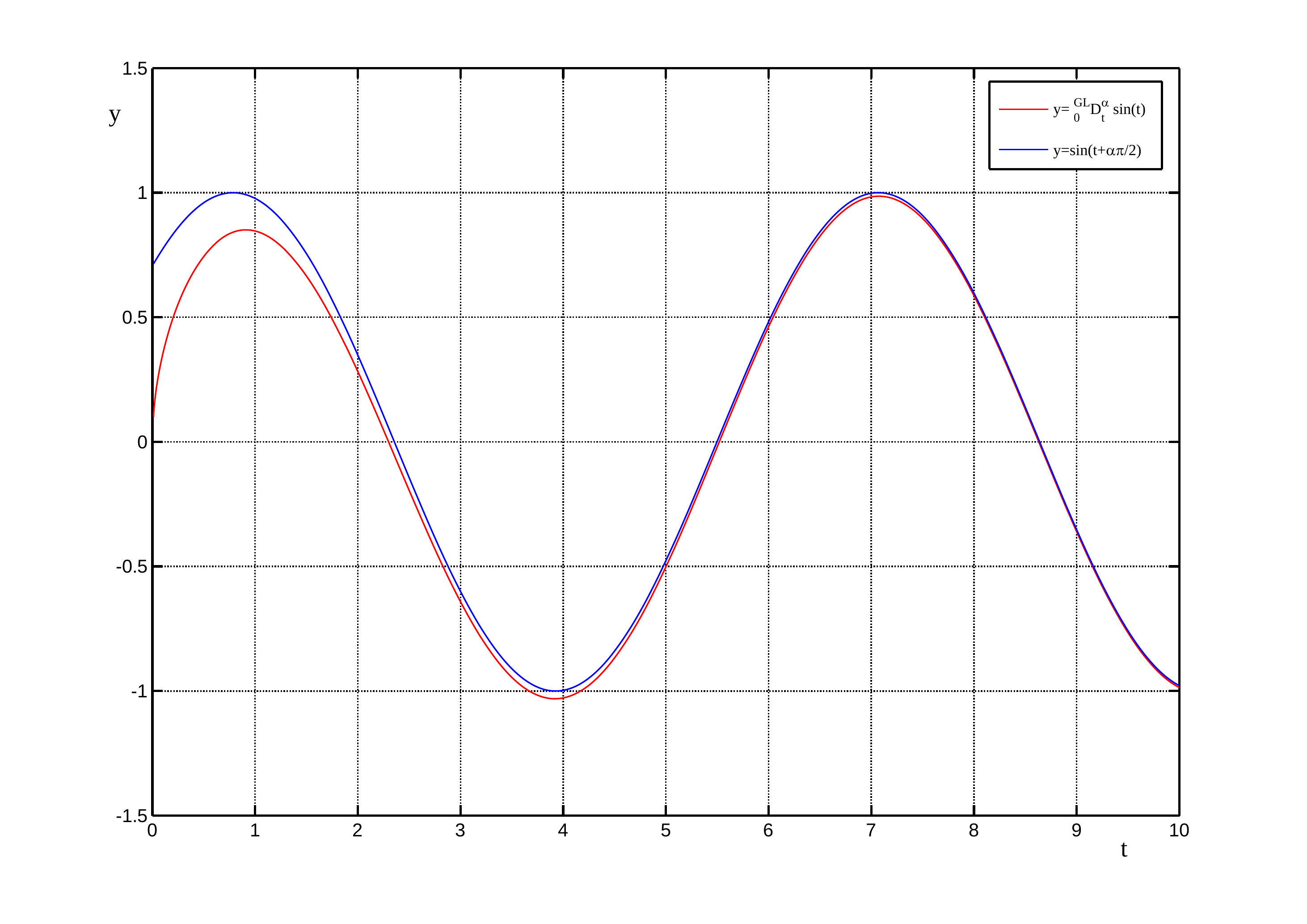}%
\caption{Fractional derivative of sine function $\ _{\ \ }^{^{GL}}D_{t}^{\alpha}sin(t)$ for $\alpha=0.5$}%
\label{fg1}%
\end{center}
\end{figure}

\begin{figure}
[ptb]
\begin{center}
\includegraphics[
height=1.7in,
width=5in
]%
{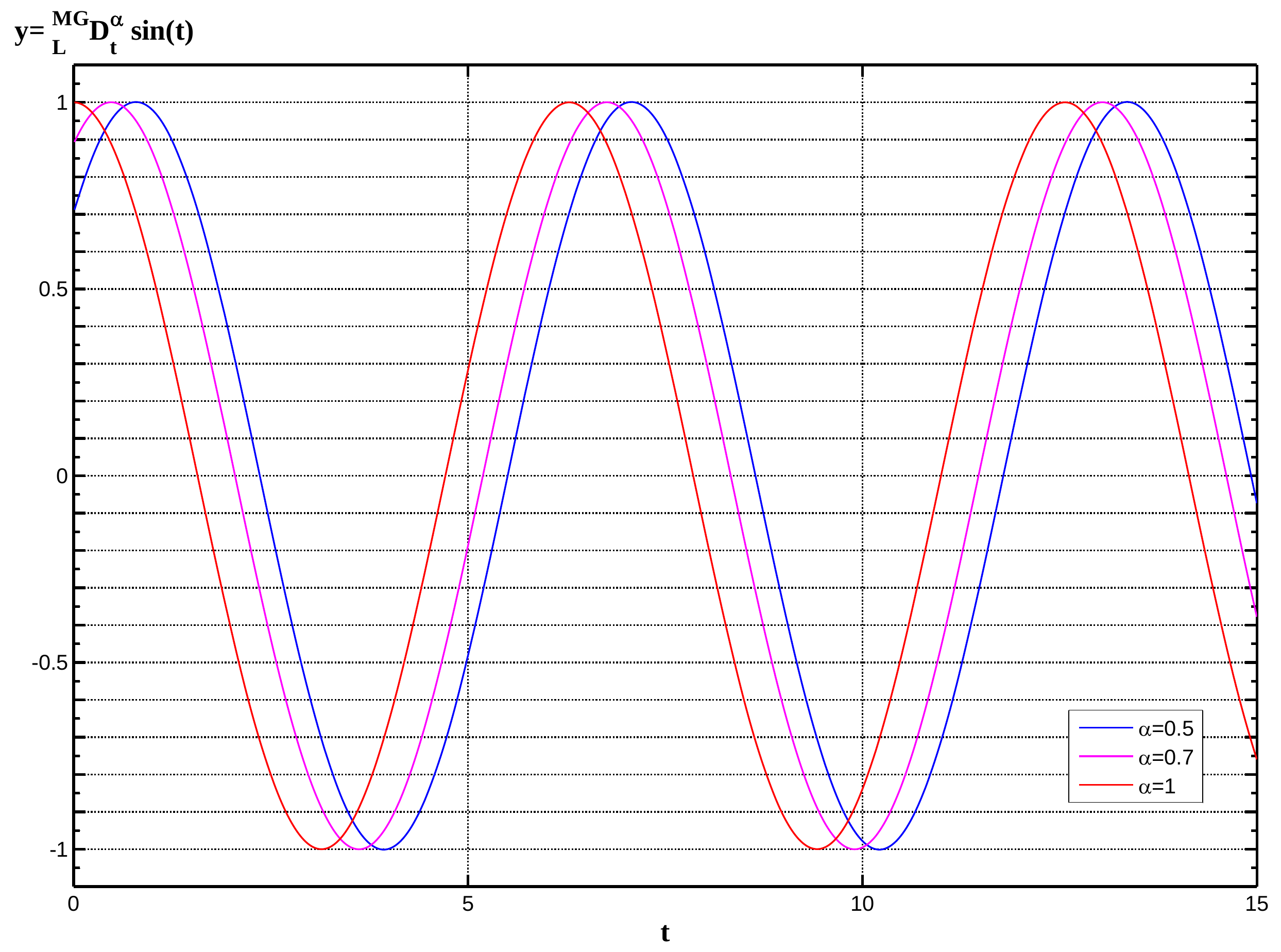}%
\caption{Fractional derivative of sine function $\ _{\ \ L}^{^{MG}}D_{t}^{\alpha}sin(t)$ for $L=30$ and some value of $\alpha$.}%
\label{fg2}%
\end{center}
\end{figure}

\begin{figure}
[ptb]
\begin{center}
\includegraphics[
height=2in,
width=5in
]%
{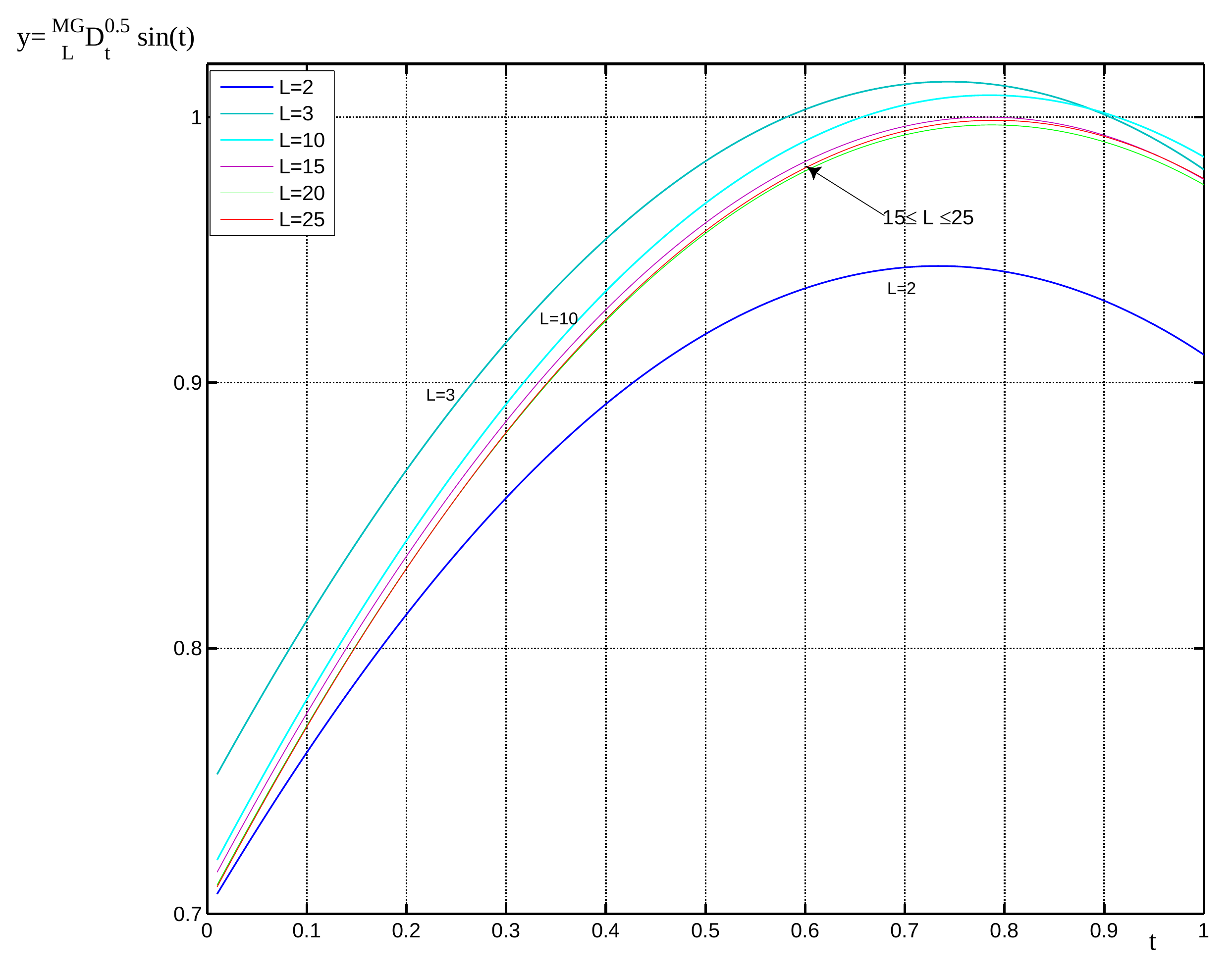}%
\caption{Influence of the memory length $L$ on the fractional derivative $\ _{\ \ L}^{^{MG}}D_{t}^{0.5}sin(t)$.}%
\label{fg3}%
\end{center}
\end{figure}
\newpage
\section{Conclusion and discussion}
Although the idea of fixed memory length is inspired from the short-memory principle introduced by I.Podlubny (1999) for numerical needs, there is a significantly difference between this two ideas near the starting point. Namely for the calculation of the fractional derivative of a certain function $f$, on the interval $[a,b]$, using the first idea, the  memory  length $L$ is fixed for all $t \in [a,b ]$, as previously mentioned, but using  the second idea, if $t \in [a,a+L]$ then the lower terminal is fixed at $a$, and the memory length $L$ is a function of $t$ ( $L=t-a$ ) and this do not allow the preservation of periodicity.
The modified definition of Gr$\ddot{\text{u}}$nwald-Letnikov fractional-order derivative reported in this paper posses two useful properties, the first is the preservation of periodicity as it is demonstrated and the second one is the short memory, which reduces considerably the cost of numerical computations. We have proven that contrary to fractional autonomous systems in term of classical fractional derivative the fractional autonomous systems in term of the modified fractional derivative can generate exact periodic solutions.
Now, this related question may be raised. If a differential equation is described based on the fractional operator proposed in this paper. What hypothesis on the equation are required for guaranteeing the existence and uniqueness of the solution? and what type of initial conditions are required? Finding the answer of this question and investigate other property of the proposed fractional derivative, can be an interesting topic for future research works.


\begin{thebibliography}{99}

\bibitem {0} B. Ross, The development of fractional calculus 1695-1900, Hist. Math. 4 (1977) 75--89.

\bibitem {2} I. Podlubny. Fractional differential equations, Academic Press, San Diego, 1999.

\bibitem {2*} K.B.O ldham, J. Spanier, The fractional calculus theory and applications of differentiation and integration to Arbitrary Order, Academic press, INC, USA, 1974.

\bibitem {3} S. G. Samko, A. A. Kilbas, O. I. Marichev, Fractional integrals
and derivatives: theory and applications, Gordan and Breach, Amsterdam, 1993.

\bibitem {4} P. L. Butzer, U. Westphal, An introduction to fractional calculus. In: Hilfer R, editor. Applications of fractional calculus in physics, World Scientific, Singapore, 2000.

\bibitem {0*} C.A.Monje, Y.Q.Chen,B. M. Vinagre, D. Xue, V. Feliu,  Fractional-order Systems and Controls Fundamentals and Applications, Springer-Verlag London Limited, 2010.

\bibitem {1} M. Caputo, Linear models of dissipation whose Q is almost
frequency independent-II. Geophys J R Astron Soc, 13 (1967) 529--39. 

\bibitem {a} R.L. Bagley, RA. Calico, Fractional order state equations for the control of viscoelastically damped structures. J Guid Control Dyn, 14  (1991) 304--11.
\bibitem {b} H.H. Sun, AA. Abdelwahab, B. Onaral. Linear approximation of transfer function with a pole of fractional order. IEEE Trans Auto Contr, 29 (1984)  441--4.
\bibitem {c} M. Ichise, Y. Nagayanagi, T. Kojima, An analog simulation of noninteger order transfer functions for analysis of electrode process. J Electroanal Chem,  33 (1971) 253--65.
\bibitem {d} O. Heaviside, Electromagnetic theory, Chelsea, New York, 1971.
\bibitem {e} D. Kusnezov, A. Bulgac, GD. Dang, Quantum levy processes and fractional kinetics, Phys Rev Lett,  82 (1999) 1136--9.

\bibitem {e1} J.G. Miranda, Synchronization and control of chaos: an introduction for
scientists and engineers, Imperial College Pr, 2004.

\bibitem {f} M.S. Tavazoei, A note on fractional-order derivatives of periodic functions, Automatica, 46  (2010) 945--948

\bibitem {g} M.S. Tavazoei,  M.Haeri, A proof for non existence of periodic solutions
in time invariant fractional order systems, Automatica, 45 (2009) 1886--1890.

\bibitem {h} M.S. Tavazoei, M. Haeri,  M. Attari, S.Bolouki,  M. Siami, More details
on analysis of fractional-order van der pol oscillator. Journal of Vibration and
Control, 15(6) (2009) 803--819.

\bibitem {i} M. Yazdani, H. Salarieh, On the existence of periodic solutions in time-invariant fractional order systems, Automatica, 47 (2011) 1834--1837.

\bibitem {j} E. Kaslik, S. Sivasundaram, Non-existence of periodic solutions in fractional-order dynamical
systems and a remarkable difference between integer and fractional-order derivatives of periodic functions, Nonlinear Analysis: Real World Applications, 13 (2012) 1489--1497.

\bibitem {k} M.S Abdelouahab, N.Hamri, J.W.wang, Hopf bifurcation and chaos in fractional-order modified
hybrid optical system, Nonlinear Dyn,  69 (2012) 275-284.

\bibitem {l} D. Cafagna, G. Grassi, On the simplest fractional-order memristor-based chaotic
system, Nonlinear Dyn, 70 (2012) 1185-1197.

\end{thebibliography}
\end{document}